\newtheorem{theorem}{Theorem}[section]
\newtheorem{proposition}[theorem]{Proposition}
\newtheorem{corollary}[theorem]{Corollary}
\theoremstyle{definition} 
\newtheorem{definition}[theorem]{Definition}
\newtheorem{remark}[theorem]{Remark}
\newtheorem{notation}[theorem]{Notation} 
\renewcommand{\P}{\bb{P}} 
\newcommand{\C}{\bb{C}}
\newcommand{\Q}{\bb{Q}}
\newcommand{\qu}{/\kern-.7ex/}
\newcommand{\lqu}{\backslash \kern-.7ex \backslash}
\newcommand{\on}{\operatorname} 
\newcommand{\Aut}{\on{Aut}}
\newcommand{\rank}{\on{rank}}
\newcommand{\bb}[1]{\ensuremath{\mathbb{#1}}}
\title[Stacky Double Ramification Cycles]{Double ramification cycles on the moduli spaces of admissible covers}
\author{Hsian-Hua Tseng}
\address{Department of Mathematics\\ Ohio State University\\ 100 Math Tower, 231 West 18th Ave.\\Columbus\\ OH 43210\\ USA}
\email{hhtseng@math.ohio-state.edu}
\author{Fenglong You}
\address{Department of Mathematical and Statistical Sciences, 632 CAB, University of Alberta, Edmonton AB T6G 2G1 Canada.} \email{fenglong@ualberta.ca}
\keywords{}
\begin{document}
\date{\today}

\begin{abstract} 
We derive a formula for the virtual class of the moduli space of rubber maps to $[\P^1/G]$ pushed forward to the moduli space of stable maps to $BG$. As an application, we show that the Gromov-Witten theory of $[\P^1/G]$ relative to $0$ and $\infty$ are determined by known calculations.
\end{abstract}

\maketitle 

\tableofcontents
\section{Introduction}
\subsection{$\mathbb{P}^1$-stacks}
This paper is motivated by the study of Gromov-Witten theory of $\mathbb{P}^1$-stacks of the following form:
\[
[\mathbb P^1/G].
\]
Here, $G$ is a finite group. The $G$-action on $\mathbb P^1$ is given by the one-dimensional representation $L=\mathbb C$,
\[
\varphi:G\longrightarrow \mu_a=\on{Im} \varphi\subset \mathbb C^*=GL(L),
\]
together with the trivial one-dimensional representation $\mathbb C$ via 
\[
\mathbb P^1=\mathbb P(L\oplus \mathbb C).
\]
The $\mathbb C^*$-action on $\mathbb P^1$ given by
\[
\lambda\cdot[z_0,z_1]:=[z_0,\lambda z_1], \quad \lambda\in \mathbb C^*, [z_0,z_1]\in \mathbb{P}^1
\]
commutes with this $G$-action and induces a $\mathbb C^*$-action on $[\mathbb P^1/G]$.

\subsection{Stacky rubbers}
The \emph{relative} Gromov-Witten theories of the pairs 
\begin{equation}\label{eqn:rel_pairs}
([\mathbb P^1/G],[0/G]), ([\mathbb P^1/G],[0/G]\cup [\infty/G])
\end{equation} 
arise naturally in the pursue of Leray-Hirsch type results in orbifold Gromov-Witten theory, see \cite{TY}. Indeed, 
\[
[\mathbb P^1/G]=[\mathbb P(L\oplus \mathbb C)/G]\longrightarrow BG
\]
can be viewed as the stacky $\mathbb P^1$-bundle associated to the line bundle 
\[
L\longrightarrow BG.
\]
The relative Gromov-Witten theories of the pairs (\ref{eqn:rel_pairs}) may be studied using virtual localization with respect to the $\mathbb C^*$-action on $[\mathbb P^1/G]$. Rubber invariants naturally arise in this approach. Let
\[
\overline{M}_{g,I}([\mathbb P^1/G],\mu_0,\mu_\infty)^\sim
\]
be the moduli space of rubber maps, see Section \ref{sec:dr_cycle} for precise definitions. Post-composition with $[\mathbb P^1/G]\rightarrow BG$ defines a map
\[
\epsilon:\overline{M}_{g,I}([\mathbb P^1/G],\mu_0,\mu_\infty)^\sim \longrightarrow \overline{M}_{g,l(\mu_0)+l(\mu_\infty)+\#I}(BG).
\]
The cycle 
\[
DR_g^G(\mu_0,\mu_\infty,I):=\epsilon_*[\overline{M}_{g,I}([\mathbb P^1/G],\mu_0,\mu_\infty)^\sim]^{\text{vir}}\in A^g(\overline{M}_{g,l(\mu_0)+l(\mu_\infty)+\#I}(BG))
\]
is termed {\em stacky double-ramification cycle}. The main result of this paper is a formula for $DR_g^G(\mu_0,\mu_\infty,I)$. The formula, which involves complicated notation, is given in Theorem \ref{dr-cycle-theorem} below. 

When $G=\{1\}$, our formula reduces to Pixton's formula for double ramification cycles, proven in \cite{JPPZ}. Our proof, given in the bulk of this paper, closely follows that of \cite{JPPZ}.

The main application of the formula for $DR_g^G(\mu_0,\mu_\infty,I)$ is the following
\begin{theorem}\label{thm:app}
The relative Gromov-Witten theories of 
\[
([\mathbb P^1/G],[0/G]) \text{ and } ([\mathbb P^1/G],[0/G]\cup [\infty/G])
\]
are completely determined.
\end{theorem}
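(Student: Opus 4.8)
The plan is to prove Theorem~\ref{thm:app} by virtual localization with respect to the $\mathbb C^*$-action on $[\mathbb P^1/G]$, and to evaluate the resulting rubber contributions using the formula for $DR_g^G(\mu_0,\mu_\infty,I)$ of Theorem~\ref{dr-cycle-theorem}. The $\mathbb C^*$-action on $\mathbb P^1=\mathbb P(L\oplus\mathbb C)$ fixes $0$ and $\infty$ and therefore induces a $\mathbb C^*$-action on $[\mathbb P^1/G]$ with fixed loci $[0/G]$ and $[\infty/G]$, each a gerbe over $BG$. The relative Gromov-Witten invariants of the pairs in~\eqref{eqn:rel_pairs} are integrals over the moduli spaces of relative stable maps to $[\mathbb P^1/G]$ with prescribed contact orders along $[0/G]$, respectively along $[0/G]\cup[\infty/G]$; these relative moduli inherit the $\mathbb C^*$-action, so their invariants are computed by the virtual localization formula as a sum over $\mathbb C^*$-fixed loci.

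First I would describe the fixed loci. As in the localization analysis of ordinary $\mathbb P^1$-bundles, a $\mathbb C^*$-fixed relative stable map decomposes its source curve into: vertex components contracted to the fixed gerbes $[0/G]$ and $[\infty/G]$, parametrized by moduli of stable maps to $BG$; edge components that multiply cover the orbit $[(\mathbb P^1\setminus\{0,\infty\})/G]$, totally ramified over the two fixed points and hence rigid; and, near the relative divisor, rubber components that absorb the bubbling. The rubber pieces are parametrized precisely by the rubber moduli $\overline M_{g,I}([\mathbb P^1/G],\mu_0,\mu_\infty)^\sim$ of Section~\ref{sec:dr_cycle}, and their total contribution to the localization sum is an integral of $\epsilon_*[\overline M_{g,I}([\mathbb P^1/G],\mu_0,\mu_\infty)^\sim]^{\mathrm{vir}}=DR_g^G(\mu_0,\mu_\infty,I)$ against tautological classes on $\overline M_{g,n}(BG)$, where the node-smoothing factors $1/(w-\psi)$ at the gluing contribute all powers of the rubber $\psi$-classes.

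Next I would reduce every term of the localization sum to known quantities. The vertex and edge contributions are integrals over $\overline M_{g,n}(BG)$ of Hodge and $\psi$-classes weighted by equivariant Euler classes of normal and obstruction bundles; these are determined by the Gromov-Witten theory of $BG$, which is known, e.g.\ via admissible $G$-covers and Hodge integrals. By Theorem~\ref{dr-cycle-theorem}, the class $DR_g^G(\mu_0,\mu_\infty,I)$ is an explicit tautological class on $\overline M_{g,n}(BG)$, so the rubber contributions are likewise tautological integrals over $\overline M_{g,n}(BG)$ and are determined by the same data. To convert these localization relations into an effective computation of every relative invariant, I would invoke rubber calculus: the rigidification relation together with the dilaton and divisor equations for rubber expresses the $\psi$-capped rubber integrals arising above in terms of genuine relative invariants and of integrals against $DR_g^G$, yielding a triangular recursion in $(g,\#I,l(\mu_0)+l(\mu_\infty))$ whose base cases are Gromov-Witten invariants of $BG$. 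The same strategy applies to both pairs in~\eqref{eqn:rel_pairs}.

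The hard part will be the geometric bookkeeping underlying the fixed-locus analysis: one must identify the $\mathbb C^*$-fixed loci of the \emph{relative} moduli spaces to the stack $[\mathbb P^1/G]$ precisely, match the virtual class induced on each rubber fixed locus with the class defining $DR_g^G(\mu_0,\mu_\infty,I)$, and compute the correct $\mathbb C^*$-weights on the normal bundles in the presence of the gerbe structure over $BG$ and the representation $\varphi$ at $[0/G]$ and $[\infty/G]$. This requires a careful comparison of the deformation-obstruction theories of relative stable maps and of rubber maps along the fixed loci. Once this identification is in place, the passage from the localization relations to ``completely determined'' via the $DR_g^G$ formula and rubber calculus is formal, specializing when $G=\{1\}$ to the standard determination of the relative Gromov-Witten theory of $(\mathbb P^1,\{0,\infty\})$ and fitting the framework of~\cite{TY}.
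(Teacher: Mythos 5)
Your proposal follows essentially the same route as the paper's (much terser) proof: virtual localization with respect to the $\mathbb C^*$-action reduces the relative invariants to rubber integrals with target $\psi$-classes, the rubber calculus of \cite{MP} removes the target descendants, and the remaining rubber invariants are determined by the $DR_g^G$ formula of Theorem \ref{dr-cycle-theorem} together with the Gromov--Witten theory of $BG$ from \cite{jk} (since all evaluation maps factor through $\epsilon$). The extra detail you supply on fixed loci and the deformation theory is consistent with the localization analysis the paper carries out in its final section.
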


\begin{proof}
Since evaluation maps on $\overline{M}_{g,I}([\mathbb P^1/G],\mu_0,\mu_\infty)^\sim$ factor through $\epsilon$, rubber invariants\footnote{Following \cite{MP}, we treat disconnected invariants as products of connected ones.}
 are all determined by the formula for $DR_g^G(\mu_0,\mu_\infty,I)$, together with the Gromov-Witten theory of $BG$ solved by \cite{jk}.

Virtual localization reduces the calculation of relative Gromov-Witten invariants to calculating rubber invariants with target descendants.
By rubber calculus in the fiber class case \cite{MP}, rubber invariants with target descendants are determined by those without target descendants. The proof is complete.
\end{proof}
Theorem \ref{thm:app} is an evidence supporting \cite[Conjecture 2.2]{TY}, and we expect that Theorem \ref{thm:app} plays an important role in the general case.

\subsection{Acknowledgment}
We thank F. Janda, R. Pandharipande, and D. Zvonkine for discussions. H.-H. T. is supported in part by NSF grant DMS-1506551 and Simons Foundation Collaboration Grant.

\section{Stacky double ramification cycle}\label{sec:dr_cycle}
Let $G$ be a finite group and $L=\mathbb C$  a one dimensional $G$-representation given by the map
\[
G\stackrel{\varphi}{\longrightarrow}\mu_a=\on{Im} \varphi\subset GL(L)=\mathbb C^*.
\]
Let $K:=\ker \varphi$, we obtain the exact sequence
\[
1\longrightarrow K \longrightarrow G \stackrel{\varphi}{\longrightarrow} \mu_a\longrightarrow 1.
\]

\begin{definition}
For a conjugacy class $\mathfrak c \subset G$, define $$r(\mathfrak c)\in \mathbb N$$ to be the order of any element of $\mathfrak c$.
Define\footnote{$a_\mathfrak{c}(L)$ is well-defined because $L$ is $1$-dimensional.} $$a_{\mathfrak c}(L)\in \{0,\ldots, r(\mathfrak c)-1\}$$ to be the unique integer such that each element of $\mathfrak c$ acts on $L$ by multiplication by $\exp \left(\frac{2\pi\sqrt{-1}a_{\mathfrak c}(L)}{r(\mathfrak c)}\right)$. In other words, the representation $\varphi: G\rightarrow GL(L)=\mathbb C^*$ maps $\mathfrak c$ to $\exp \left(\frac{2\pi\sqrt{-1}a_{\mathfrak c}(L)}{r(\mathfrak c)}\right)$.
\end{definition}

Consider the quotient stack $[\mathbb P^1/G]$, where the $G$-action on $\mathbb P^1$ is given by the $1$-dimensional representation $\varphi$ together with the trivial one-dimensional representation $\mathbb C$ via 
\[
\mathbb P^1=\mathbb P(L\oplus \mathbb C).
\]
\begin{definition}\label{defn:rel_data}
Let $A$ denote the following data:
\[
\mu_0=\{(c_{0i},f_{0i},\mathfrak c_{0i})\}_i \quad \mu_{\infty}=\{(c_{\infty i},f_{\infty i},\mathfrak c_{\infty i})\}_i, \quad I=\{\mathfrak c_1,\ldots, \mathfrak c_k\},
\]
where $c_{0i}, c_{\infty i}\in \mathbb{Z}_{\geq 0}$,  $f_{0i}, f_{\infty i}\in \mathbb{N}$, $\mathfrak c_{0i}, \mathfrak c_{\infty i}, \mathfrak c_1,\ldots, \mathfrak c_k  \in \on{Conj}(G)$
such that 
\begin{enumerate}
\item
$f_{0i}$ (resp. $f_{\infty i}$) is the order of any element in $\mathfrak{c}_{0 i}$ (resp. $\mathfrak{c}_{\infty i}$).
\item
$\sum_i \frac{c_{0i}}{f_{0i}}=\sum_j \frac{c_{\infty j}}{f_{\infty j}}$.

\item
$\on{age}_{\mathfrak c_{0i}}(L)=\langle \frac{c_{0i}}{f_{0i}}\rangle, \quad \on{age}_{\mathfrak c_{\infty j}}(L)=\langle \frac{c_{\infty j}}{f_{\infty j}}\rangle$.

\item
$\on{age}_{\mathfrak c_i}(L)=0$, $1\leq i \leq k$. So $\mathfrak c_i \in \on{Conj}(K)$.

\item
Monodromy condition\footnote{For a conjugacy class $(g)$, $(g)^{-1}$ stands for the conjugacy class $(g^{-1})$.} in genus $g$ holds for $\{\mathfrak c_{0i}\} \cup \{\mathfrak c_{\infty j}^{-1}\} \cup \{\mathfrak c_1,\ldots, \mathfrak c_k\}$.
\end{enumerate}
\end{definition}
Here the monodromy condition in genus $g$ means the following.
\begin{definition}[Monodromy condition]\label{defn:monodromy}
Let $H$ be a finite group. We say that the collection of conjugacy classes $\mathfrak{c}_1,...,\mathfrak{c}_n$ of $H$ satisfy monodromy condition in genus $g$ if there exist $$h_i\in \mathfrak{c}_i, 1\leq i\leq n, \quad a_j, b_j\in H, 1\leq j\leq g,$$
such that $$\prod_{i=1}^nh_i=\prod_{j=1}^g [a_j, b_j].$$
\end{definition}

\begin{remark}
The data $\mu_0, \mu_\infty$ are referred to as {\em stacky partitions}. The length of $\mu_0$, denoted by $l(\mu_0)$, is the number of triples in the partition $\mu_0$.
\end{remark}

The moduli space 
\[
\overline{M}_{g,I}([\mathbb P^1/G],\mu_0,\mu_\infty)^\sim
\]
parametrizes stable relative maps of connected twisted curves of genus $g$ to rubber with ramification profiles $\mu_0,\mu_\infty$ over $[0/G]$ and $[\infty/G]$ respectively, and additional marked points whose stack structures are described by $I$. As noted in \cite[Appendix A. 2]{TY}, rubber theory in the stack setting may be defined in the same way as e.g. \cite[Section 1.5]{MP}.

Set $n=l(\mu_0)+l(\mu_\infty)+\#I$. A Riemann-Roch calculation\footnote{Note that the relative tangent bundle $T_{[\P^1/G]}(-[0/G]-[\infty/G])$ entering the Riemann-Roch formula is in fact trivial.} shows that the virtual dimension of $\overline{M}_{g,I}([\mathbb P^1/G],\mu_0,\mu_\infty)^\sim$ is 
$$\text{vdim}\,\overline{M}_{g,I}([\mathbb P^1/G],\mu_0,\mu_\infty)^\sim=2g-3+n.$$  

The moduli space $\overline{M}_{g,n}(BG)$ of $n$-pointed genus $g$ stable maps to $BG$ is smooth of dimension $3g-3+n$. There is a morphism
\[
\epsilon:\overline{M}_{g,I}([\mathbb P^1/G],\mu_0,\mu_\infty)^\sim \longrightarrow \overline{M}_{g,n}(BG)
\]
defined by post-composition with $[\mathbb P^1/G]\rightarrow BG$.
\begin{definition}
The stacky double ramification cycle is defined to be the push-forward
\[
DR_g^G(A)=\epsilon_* [\overline{M}_{g,I}([\mathbb P^1/G],\mu_0,\mu_\infty)^\sim]^{\text{vir}}\in A^g(\overline{M}_{g,n}(BG)).
\]
\end{definition}
\begin{remark}
The cycle $DR_g^G(A)$ is supported on the component of $\overline{M}_{g,n}(BG)$ parametrizing stable maps with orbifold structures at marked points given by $\{\mathfrak{c}_{0i}\}\cup\{\mathfrak{c}^{-1}_{\infty j}\}\cup I$.
\end{remark}

\section{Total Chern class}
\subsection{General case}\label{subsec:chern_gen}

Let $H$ be a finite group and $V=\mathbb C$ a one-dimensional $H$-representation. The representation $H\rightarrow GL(V)=\mathbb C^*$ maps a conjugacy class $\mathfrak c$ to $\exp\left(2\pi\sqrt{-1}\frac{a_{\mathfrak c}(V)}{r(\mathfrak c)}\right)$, where $a_{\mathfrak c}(V)\in\{0,\ldots, r(\mathfrak c)-1\}$.

We write $\on{Conj}(H)$ for the set of conjugacy classes of $H$. The inertia stack $IBH$ is decomposed as 
\[
IBH=\coprod_{\mathfrak c=(h)\in \on{Conj}(H)} BC_H(h)
\]
where $C_H(h)\subseteq H$ is the centralizer of $h\in H$.

We write $\overline{M}_{g,n}(BH)$ for the moduli stack of stable $n$-pointed genus $g$ maps to $BH$. For $1\leq i \leq n$, there is the $i$-th evaluation map
\[
\on{ev}_i: \overline{M}_{g,n}(BH)\longrightarrow IBH.
\]
Pick $\mathfrak c_1,\ldots, \mathfrak c_n \in \on{Conj}(H)$, let 
\[
\overline{M}_{g,n}(BH; \mathfrak c_1,\ldots,\mathfrak c_n):=\bigcap_{i=1}^n\on{ev}_i^{-1}(BC_H(h_i)),
\]
where $\mathfrak c_i=(h_i)$. Denote the universal family as follows:
\begin{displaymath}
    \xymatrix{ \mathcal C \ar[r]^f\ar[d]^\pi & BH\\
   \overline{M}_{g,n}(BH; \mathfrak c_1,\ldots,\mathfrak c_n)&}
\end{displaymath}

Consider the virtual bundle $$V_{g,n}:=\mathbf{R}\pi_*f^*V,$$ where $V$ is viewed as a line bundle on $BH$. The Chern character $ch(V_{g,n})$ was calculated in much greater generality in \cite{Tseng}, by using To\"en's Grothendieck-Riemann-Roch formula for stacks \cite{Toen}. Applied to the present situation, we find
\begin{align}\label{chern-character-formula}
ch(V_{g,n})=& \pi_*(ch(f^*V)Td^\vee(\bar{L}_{n+1}))\\
\notag & -\sum_{i=1}^n\sum_{m\geq 1}\frac{\on{ev}_i^*A_m}{m!}\bar{\psi}_i^{m-1}\\
\notag & +\frac 1 2 (\pi\circ \iota)_*\sum_{m\geq 2} \frac {1} {m!} r^2_{\text{node}}(\on{ev}^*_{\text{node}}A_m)\frac{\bar\psi_+^{m-1}+(-1)^m\bar\psi_-^{m-1}}{\bar\psi_+ +\bar\psi_-} 
\end{align}
The formula is explained and further processed as follows.
\begin{itemize}

\item $r_{\text{node}}$ is the order of the orbifold structure at the node.

\item $\on{ev}_{\text{node}}$ is the evaluation map at the node defined in \cite[Appendix B]{Tseng}.

\item $\bar\psi_+$ and $\bar\psi_-$ are the $\bar\psi$-classes associated the the branches of the node.

\item Since $\dim BH=0$, we have
\[
ch(f^*V)=ch_0(f^*V)=\rank V=1.
\]
\item By definition, the Todd class is 
\[
Td^\vee(\bar{L}_{n+1})=\frac{\bar\psi_{n+1}}{e^{\bar\psi_{n+1}}-1}=\sum_{r\geq 0}\frac{B_r}{r!}\bar\psi_{n+1}^r,
\]
where $B_r$'s are the Bernoulli numbers. Therefore,
\[
\pi_*(ch(f^*V)Td^\vee(\bar{L}_{n+1}))=\sum_{r\geq 0}\frac{B_r}{r!}\pi_*(\bar\psi_{n+1}^r)
\]
\item $A_m$ is defined in \cite[Definition 4.1.2]{Tseng}. We have $A_m\in H^*(IBH)$. For $\mathfrak c=(h)\in \on{Conj}(H)$, the component of $A_m$ in $H^0(BC_H(h))\subset H^*(IBH)$ is $B_m\left(\frac{a_{\mathfrak c}(V)}{r(\mathfrak c)}\right)$. Here $B_m(x)$ are Bernoulli polynomials, defined by
$$\frac{te^{tx}}{e^t-1}=\sum_{m\geq 0}\frac{B_m(x)}{m!}t^m.$$

\item The map $\iota: \mathcal Z_{\on{node}}\hookrightarrow \mathcal C$ is the inclusion of the locus of the nodes. The last term of the right hand side of \eqref{chern-character-formula} may be rewritten using the map
\[
B_{\on{node}}\stackrel{\mathfrak i}{\hookrightarrow} \overline{M}_{g,n}(BH;\mathfrak c_1,\ldots, \mathfrak c_n),
\]
whose image is the locus of nodal curves. The map $\mathfrak i$ exhibits $B_{\text{node}}$ as the universal gerbe at the node, and hence degree of $\mathfrak i$ is $\frac{1}{r_{\text{node}}}$.
\end{itemize}

Given the above, we can write $ch_m(V_{g,n})$, the degree-$2m$ component of $ch(V_{g,n})$, as
\begin{align*}
ch_m(V_{g,n})=& \frac{B_{m+1}}{(m+1)!}\pi_*(\bar\psi_{n+1}^{m+1})\\
& +\sum_{i=1}^n \frac{1}{(m+1)!}B_{m+1}\left(\frac{a_{\mathfrak c_i}(V)}{r(\mathfrak c_i)}\right)\bar\psi_i^m\\
& +\frac 12 \sum_{\mathfrak c\in \on{Conj}(H)} \frac{r(\mathfrak c)}{(m+1)!}B_{m+1}\left(\frac{a_{\mathfrak c}(V)}{r(\mathfrak c)}\right)\zeta_{\mathfrak{c}*}\left(\frac{\bar\psi_+^m-(-\bar\psi_-)^m}{\bar\psi_++\bar\psi_-}\right)
\end{align*}
where $\zeta_\mathfrak{c}: B_{\text{node},\mathfrak c}\rightarrow \overline{M}_{g,n}(BH;\mathfrak c_1,\ldots, \mathfrak c_n)$ is the universal gerbe at the node whose orbifold structure is given by $\mathfrak c$.

Using the formula 
\[
c(-E^\bullet)=\exp\left(\sum_{m\geq 1}(-1)^m (m-1)!ch_m(E^\bullet)\right), \quad E^\bullet \in D^b,
\]
we can derive a formula for $c(-V_{g,n})$. To write this down we need  more notation.

As in \cite{JPPZ}, the strata of $\overline{M}_{g,n}$ are indexed by stable graphs. The strata of $\overline{M}_{g,n}(BH;\mathfrak c_1,\ldots, \mathfrak c_n)$ are indexed by stable graphs together with choices of conjugacy classes of $H$ describing orbifold structures.

 Let $G_{g,n}$ be the set of stable graphs of genus $g$ with $n$ legs. Following \cite{JPPZ}, a stable graph is denoted by
\[
\Gamma=(\mathrm V,\mathrm H,\mathrm L, g:\mathrm V\rightarrow \mathbb Z_{\geq 0}, v:\mathrm H\rightarrow V, \iota: \mathrm H\rightarrow \mathrm H)\in G_{g,n}.
\]
 Properties in \cite[Section 0.3.2]{JPPZ} are required for $\Gamma$.

\begin{remark}
The set of legs $\mathrm L(\Gamma)$ corresponds to the set of markings. The set of half edges $\mathrm H(\Gamma)$ corresponds to the union of the set of a side of an edge and the set of legs. Each half edge is labelled with a vertex $v\in \mathrm V(\Gamma)$. Each vertex $v\in \mathrm V(\Gamma)$ is labelled with a nonnegative integer $g(v)$, called the genus.
\end{remark}
\begin{definition}
We define $\chi_{\Gamma,H}$ to be the set of maps

\[
\chi:\mathrm H(\Gamma)\rightarrow \on{Conj}(H)
\]
such that,
\begin{itemize}
\item $\chi$ maps the $i$-th leg $\mathrm h_i$ to $\mathfrak c_i$, $1\leq i \leq n$;

\item for a vertex $v\in \mathrm V(\Gamma)$, there exists $(\alpha_j), (\beta_j)\in \on{Conj}(H)$, for $1\leq j \leq g(v)$, and $k_\mathrm h\in \chi(\mathrm h)$, for $\mathrm h\in v$, such that
\[
\prod_{\mathrm h\in v}k_\mathrm h=\prod_{j=1}^{g(v)}[\alpha_j,\beta_j];
\]
\item for an edge $e=(\mathrm h,\mathrm h^\prime)\in \mathrm E(\Gamma)$, there exists $k\in \chi(\mathrm h)$, $k^\prime \in \chi(\mathrm h^\prime)$, such that 
\[
kk^\prime=\on{Id}\in H.
\]
\end{itemize}
\end{definition}
For each $\Gamma\in G_{g,n}$ and $\chi \in \chi_{\Gamma, H}$, there is a component $\overline{M}_{\Gamma,\chi}\subset B_{\text{node}}$ parametrizing maps with nodal domains of topological types given by $\Gamma$ and orbifold structures given by $\chi$. Let 
\[
\zeta_{\Gamma,\chi}:\overline{M}_{\Gamma,\chi}\longrightarrow \overline{M}_{g,n}(BH;\mathfrak c_1,\ldots, \mathfrak c_n)
\]
be the restriction of $\mathfrak i$ to this component. Then $c(-V_{g,n})$ is
\begin{equation}
\begin{split}
& \sum_{\Gamma\in G_{g,n}}\sum_{\chi\in \chi_{\Gamma,H}}\frac{1}{|\Aut(\Gamma)|}\zeta_{\Gamma,\chi*}\left[
\prod_{v\in V(\Gamma)}\exp\left(-\sum_{m\geq 1}(-1)^{m-1}\frac{B_{m+1}}{m(m+1)}\kappa_m(v)\right)\times\right.\\
& \times \prod_{i=1}^n\exp\left(\sum_{m\geq1}(-1)^{m-1}\frac{1}{m(m+1)}B_{m+1}\left(\frac{a_{\mathfrak c_i}(V)}{r(\mathfrak c_i)}\right)\bar\psi_{\mathrm h_i}^m\right)\times\\
& \left. \times \prod_{\substack{e\in \mathrm E(\Gamma)\\ e=(\mathrm h_+,\mathrm h_-)}}r(\chi(\mathrm h_+))\frac{1}{\bar\psi_{\mathrm h_+}+\bar\psi_{\mathrm h_-}}
\left(1-\exp\left(\sum_{m\geq 1}\frac{(-1)^{m-1}}{m(m+1)}B_{m+1}\left(\frac{a_{\chi(\mathrm h_+)}(V)}{r(\chi(\mathrm h_+))}\right)\left(\bar\psi_{\mathrm h_+}^m-(-\bar\psi_{\mathrm h_-})^m\right)\right)\right)\right].
\end{split}
\end{equation}
\begin{remark}
\hfill
\begin{enumerate}
\item
For a half-edge $h$, $\bar{\psi}_h$ denotes the descendant at the marked point/node corresponding to $h$. 

\item
For a vertex $v$, let $\overline{M}_v(BH)$ be the moduli space of stable maps to $BH$ described by $v$ and let $\pi_v: \mathcal{C}_v\to \overline{M}_v(BH)$ be the universal curve. Write $\bar{\psi}_v\in A^1(\mathcal{C}_v)$ for the descendant corresponding to the additional marked point. Then define $\kappa_m(v):= \pi_{v*}(\bar\psi_{v}^{m+1})$.
\end{enumerate}
\end{remark}

\subsection{Cyclic extensions}\label{subsec:gp_ext}
Let $r\in \mathbb Z_{>0}$, the $r$-th power map 
\[
\mathbb C^*\longrightarrow \mathbb C^*, \quad z \longmapsto z^r
\]
gives the map
\[
\mu_{ar}\longrightarrow \mu_a.
\]
The kernel of the map is $\mu_r$. Hence this gives the exact sequence
\[
1\longrightarrow \mu_r \stackrel{g}{\longrightarrow} \mu_{ar}\stackrel{f}{\longrightarrow} \mu_a \longrightarrow 1,
\]
where 
\[
g\left(\exp\left(\frac{2\pi\sqrt{-1}l}{r}\right)\right)=\exp\left(\frac{2\pi\sqrt{-1}la}{ra}\right), \quad 0\leq l \leq r-1,
\]
and
\[
f\left(\exp\left(\frac{2\pi\sqrt{-1}k}{ar}\right)\right)=\exp\left(\frac{2\pi\sqrt{-1}k}{a}\right), \quad 0\leq k \leq ar-1.
\]
There is a unique finite group $G(r)$ which fits into the following diagram with exact rows and columns:
\begin{equation}\label{diag:ext}
    \xymatrix{              &                                 & 1                 \ar[d]                 & 1                 \ar[d]          &   \\
            &                                 & \mu_r\ar@{=}[r]                 \ar[d]                 & \mu_r                \ar[d]          &   \\
    1 \ar[r] & K \ar[r]\ar@{=}[d] & G(r)                 \ar[r]^{\alpha}\ar[d]^{\beta}        & \mu_{ar}         \ar[r]\ar[d] & 1 \\
    1 \ar[r] & K \ar[r]                 & G             \ar[r]^{\varphi}\ar[d]        & \mu_a        \ar[r]\ar[d] & 1 \\
                &                                 & 1                                           & 1                                    &}
\end{equation}
Geometrically, the map $\mu_{ar}\rightarrow \mu_a$ gives a $\mu_r$-gerbe over $B\mu_a$,
\[
B\mu_{ar}\longrightarrow B\mu_a.
\]
The map $\varphi: G\rightarrow \mu_a$ gives a map
\[
BG\longrightarrow B\mu_a.
\]
Pulling back the $\mu_r$-gerbe to $BG$ using this map, we obtain the gerbe
\begin{equation}\label{eqn:bgr_bg}
BG(r)\longrightarrow BG.
\end{equation}
Moreover, when viewing the representation $L$ as a line bundle on $BG$, $BG(r)$ is the gerbe of $r$-th roots of $L\rightarrow BG$. The homomorphism 
\[
G(r)\longrightarrow \mu_{ar}\subset \mathbb C^*
\] 
is a one-dimensional representation of $G(r)$ which corresponds to the universal $r$-th root of $L$ on $BG(r)$. We denote this $r$-th root by
\[
L^{1/r}\longrightarrow BG(r).
\] 

Let $\mathfrak c\in \on{Conj}(G)$. Then $\varphi(\mathfrak c)\in \mu_a$ is a single number. The inverse image of $\varphi(\mathfrak c)$ under the $r$-th power map $\mu_{ar}\rightarrow \mu_a$ has size $r$. The inverse image $\beta^{-1}(\mathfrak c)\subset G(r)$ can be partitioned into conjugacy classes of $G(r)$. Moreover, $\alpha$ maps these conjugacy classes to the set of inverse images of $\varphi(\mathfrak c)$, which has size $r$. So there are at least $r$ conjugacy classes in $\beta^{-1}(\mathfrak c)$. By the counting result \cite[Example 3.4]{TT1}, there are at most $r$ conjugacy classes. Therefore, there are exactly $r$ conjugacy classes of $G(r)$ that map to $\mathfrak c$ and they are determined by their images under $G(r)\stackrel{\alpha}{\rightarrow} \mu_{ar}$. 

A canonical splitting of 
\[
1\longrightarrow \mu_r \longrightarrow \mu_{ar}\longrightarrow \mu_a\longrightarrow 1
\]
is given by 
\begin{equation}\label{eqn:splitting}
\mu_a\longrightarrow \mu_{ar}, \quad g\mapsto \exp \left(\frac{2\pi\sqrt {-1}\on{age}_g(L)}{r} \right).
\end{equation}
Using this, for $g\in\mu_a$, we may identify the inverse image of $g$ under $\mu_{ar}\rightarrow \mu_a$ as
\[
\left\{\exp\left.\left(2\pi\sqrt {-1}\left(\frac{\on{age}_g(L)}{r}+\frac e r \right)\right)\right| 0\leq e \leq r-1\right\}
\]
and hence with 
\[
\mu_r=\left\{\exp\left.\left(\frac{2\pi\sqrt{-1}e}{r}\right)\right|0\leq e\leq r-1 \right\}.
\]
In summary, given $\mathfrak c\in\on{Conj}(G)$, to specify the lifting $\tilde {\mathfrak c} \in \on{Conj}(G(r))$ such that $\beta(\tilde{\mathfrak c})=\mathfrak c$ is equivalent to specifying $e\in \{0,\ldots, r-1\}$. 

Moreover, given $\mathfrak c_1,\ldots, \mathfrak c_n \in \on{Conj}(G)$ satisfying monodromy condition in genus $g$, selecting $\tilde{\mathfrak c}_1,\ldots, \tilde{\mathfrak c}_n \in \on{Conj}(G(r))$ with $\beta(\tilde{\mathfrak c}_i)=\mathfrak c_i$ satisfying monodromy condition in genus $g$ is equivalent to selecting $e_1,\ldots, e_n \in \{0,\ldots,r-1\}$ such that 
\begin{equation}\label{eqn:monod_r}
\sum_{i=1}^n e_i \equiv -\sum_{i=1}^n\text{age}_{\mathfrak{c}_i}(L) \mod r.
\end{equation}
This can be deduced from the lifting analysis in \cite[Section 5]{TT}. We can also argue more directly as follows. Since $\mathfrak c_1,\ldots, \mathfrak c_n$ satisfy monodromy condition in genus $g$, there exists a stable map $f: \mathcal{C}\to BG$ with $\mathcal{C}$ smooth of genus $g$ and $\mathcal{C}$ has orbifold points described by $\mathfrak c_1,\ldots, \mathfrak c_n$. Calculating $\chi(\mathcal{C}, f^*L)$ by Riemann-Roch, we see that $\sum_{i=1}^n\text{age}_{\mathfrak{c}_i}(L)\in \mathbb{Z}$. Similarly, having the required $\tilde{\mathfrak c}_1,\ldots, \tilde{\mathfrak c}_n$ implies the existence of a stable map $\tilde{f}: \tilde{\mathcal{C}}\to BG(r)$ with $\tilde{\mathcal{C}}$ smooth of genus $g$ and $\tilde{\mathcal{C}}$ has orbifold points described by $\tilde{\mathfrak{c}}_1,\ldots, \tilde{\mathfrak{c}}_n$. Calculating $\chi(\tilde{\mathcal{C}}, \tilde{f}^*L^{1/r})$ by Riemann-Roch, we see that $\sum_{i=1}^n\text{age}_{\tilde{\mathfrak{c}}_i}(L^{1/r})\in \mathbb{Z}$. Equation (\ref{eqn:monod_r}) follows because by construction $\text{age}_{\tilde{\mathfrak{c}}_i}(L^{1/r})=(\text{age}_{\mathfrak{c}_i}(L)+e_i)/r$. This shows that equation (\ref{eqn:monod_r}) is necessary. That (\ref{eqn:monod_r}) is also sufficient can be seen by a direct calculation using the description of $G(r)$ as a set $G\times \mu_r$ endowed with the multiplication defined using the splitting (\ref{eqn:splitting}), as in \cite[Section 3]{TT0}. We omit the details.

The above discussion allows us to split a sum over $\chi_{\Gamma,G(r)}$ as a double sum over $\chi_{\Gamma,G}$ and the set $W_{\Gamma,\chi,r}$ defined as follows. 

\begin{definition}\label{Weightings}
A weighting mod $r$ associated to a stable graph $\Gamma$ and a map $\chi\in \chi_{\Gamma, G}$ is a function 
\[
w:\mathrm H(\Gamma)\longrightarrow \{0,\ldots, r-1\}
\]
such that
\begin{enumerate}
\item 
For legs $\mathrm  h_1,\ldots,\mathrm h_n$, modulo $r$, $w(\mathrm h_i)$ is either 
\begin{itemize}
    \item 
     $\frac{c_{0i}}{f_{0i}}-\on{age}_{\mathfrak c_{0i}}(L)$ for $\mathfrak{c}_{0i}$, or
    
    \item
    $\frac{c_{\infty j}}{f_{\infty j}}-\on{age}_{\mathfrak c_{\infty j}}(L)$ for $\mathfrak{c}_{\infty j}$, or
    
    \item
    $0$ for $\mathfrak{c}_k$.
\end{itemize}

\item 
For $e=(\mathrm h_+,\mathrm h_-)\in \mathrm E(\Gamma)$, if $\on{age}_{\chi(\mathrm h_+)}(L)=0$, then $w(\mathrm h_+)+w(\mathrm h_-)\equiv 0 \mod r$. If $\on{age}_{\chi(\mathrm h_+)}(L)\neq 0$, then $w(\mathrm h_+)+w(\mathrm h_-)\equiv -1 \mod r$.

\item For $v\in \mathrm V(\Gamma)$, $\sum_{\mathrm h\in v}w(\mathrm h)\equiv A(v,\chi) \mod r$, where $A(v,\chi):=-\sum_{\mathrm h\in v}\on{age}_{\chi(\mathrm h)}(L)$. 
\end{enumerate}
We write $W_{\Gamma,\chi,r}$ for the set of weightings mod $r$ associated to $\Gamma$ and $\chi$.
\end{definition}
\begin{remark}
\hfill
\begin{enumerate}
\item
For $e=(\mathrm h_+,\mathrm h_-)\in \mathrm E(\Gamma)$, the conditions on $w(h_\pm)$ ensure that $$(\text{age}_{\chi(h_-)}(L)+w(h_-))/r=1-(\text{age}_{\chi(h_+)}(L)+w(h_+))/r.$$
\item
For $v\in \mathrm V(\Gamma)$, We have $A(v,\chi)\in \mathbb Z$ by applying Riemann-Roch to $\chi(f^*L)$, where $f: C\rightarrow BG$ is a stable map with $ C$ smooth of genus $g(v)$ and orbifold marked points described by $\{\chi(\mathrm h)|\mathrm h\in v\}$.
\end{enumerate}
\end{remark}

\subsection{Total Chern class on moduli spaces of stable maps to $BG(r)$} 
We begin with the following notation. 
\begin{definition}[Liftings]\label{defn:lift}
For $\{\mathfrak c_{0i}\}_i, \{\mathfrak c_{\infty j}\}_j \subset \on{Conj}(G)$, we select liftings 
\[
\{\tilde{\mathfrak c}_{0i}\}_i, \{\tilde{\mathfrak c}_{\infty j}\}_j \subset \on{Conj}(G(r))
\]
by 
\[
\alpha(\tilde{\mathfrak c}_{0i})=\exp\left(\frac{2\pi \sqrt{-1}\frac{c_{0i}}{f_{0i}}}{r}\right)\in\mu_{ar},
\]
\[
\alpha(\tilde{\mathfrak c}_{\infty j})=\exp\left(\frac{2\pi \sqrt{-1}(-\frac{c_{\infty j}}{f_{\infty j}})}{r}\right)\in\mu_{ar}.
\]
The lifts of $\mathfrak c_1,\ldots, \mathfrak c_k \in \on{Conj}(K)\subset \on{Conj}(G)$ are chosen to be themselves, viewed via $\on{Conj}(K)\subset \on{Conj}(G(r))$.
\end{definition}

Let $\overline{M}_{g,\tilde{\mu}_0+\tilde{\mu}_\infty+I}(BG(r))$ be the moduli space of stable maps to $BG(r)$ of genus $g$ whose marked points have orbifold structures given by
\[
\{\tilde{\mathfrak c}_{0i}\}\cup \{\tilde{\mathfrak c}_{\infty j}^{-1}\}\cup\{\mathfrak c_1,\ldots,\mathfrak c_k\}.
\]
Let $\overline{M}_{g,\mu_+\mu_\infty+I}(BG)$ be similarly defined. There is a natural map 
\[
\epsilon: \overline{M}_{g,\tilde{\mu}_0+\tilde{\mu}_\infty+I}(BG(r))\longrightarrow \overline{M}_{g,\mu_0+\mu_\infty+I}(BG).
\]
Strata of $\overline{M}_{g,\mu_0+\mu_\infty+I}(BG)$ are indexed by pairs $\Gamma\in G_{g,n}$ and $\chi \in \chi_{\Gamma,G}$. Let $\zeta_{\Gamma,\chi}$ be the map from this stratum to $\overline{M}_{g,\mu_0+\mu_\infty+I}(BG)$.
Strata of $\overline{M}_{g,\tilde{\mu}_0+\tilde{\mu}_\infty+I}(BG(r))$ are indexed by $\Gamma,\chi$, and $w\in W_{\Gamma,\chi,r}$. Let $\zeta_{\Gamma,\chi,w}$ be the natural map from the stratum to  $\overline{M}_{g,\tilde{\mu}_0+\tilde{\mu}_\infty+I}(BG(r))$.

Applying the results of Section \ref{subsec:chern_gen}, we obtain the following formula for $c(-L_{g,n}^{1/r})$ on $\overline{M}_{g,\tilde{\mu}_0+\tilde{\mu}_\infty+I}(BG(r))$:
\begin{align*}
& \sum_{\Gamma\in G_{g,n}}\sum_{\chi\in \chi_{\Gamma,G}}\sum_{w\in W_{\Gamma,\chi,r}}\frac{1}{|\Aut(\Gamma)|}\zeta_{\Gamma,\chi,w*}\left[
\prod_{v\in \mathrm V(\Gamma)}\exp\left(-\sum_{m\geq 1}(-1)^{m-1}\frac{B_{m+1}}{m(m+1)}\kappa_m(v)\right)\times\right.\\
& \times \prod_{i}\exp\left(\sum_{m\geq1}(-1)^{m-1}\frac{1}{m(m+1)}B_{m+1}\left(\frac{\frac{c_{0i}}{f_{0i}}}{r}\right)\bar\psi_{i}^m\right)\times\\
& \times \prod_{j}\exp\left(\sum_{m\geq1}(-1)^{m-1}\frac{1}{m(m+1)}B_{m+1}\left(1-\frac{\frac{c_{\infty j}}{f_{\infty j}}}{r}\right)\bar\psi_{j}^m\right)\times\\
& \times \prod_{l=1}^k\exp\left(\sum_{m\geq1}(-1)^{m-1}\frac{1}{m(m+1)}B_{m+1}\bar\psi_{l}^m\right)\\
& \left. \times \prod_{\substack{e\in \mathrm E(\Gamma)\\ e=(\mathrm h_+,\mathrm h_-)}}\frac{r(\chi(\mathrm h_+))r}{\bar\psi_{\mathrm h_+}+\bar\psi_{\mathrm h_-}}
\left(1-\exp\left(\sum_{m\geq 1}\frac{(-1)^{m-1}}{m(m+1)}B_{m+1}\left(\frac{\text{age}_{\chi(\mathrm h_+)}(L)}{r}+\frac{w(\mathrm h_+)}{r}\right)\left(\bar\psi_{\mathrm h_+}^m-(-\bar\psi_{\mathrm h_-})^m\right)\right)\right)\right].
\end{align*}
By the calculation of \cite[Section 5]{TT}, the degree of $\epsilon$ on strata indexed by $\Gamma$ is $r^{\sum_{v\in V(\Gamma)}(2g(v)-1)}$. This yields the following formula for $\epsilon_*c(-L_{g,n}^{1/r})$:
\begin{align*}
& \sum_{\Gamma\in G_{g,n}}\sum_{\chi\in \chi_{\Gamma,G}}\sum_{w\in W_{\Gamma,\chi,r}}\frac{r^{2g-1-h^1(\Gamma)}}{|\Aut(\Gamma)|}\zeta_{\Gamma,\chi*}\left[
\prod_{v\in \mathrm V(\Gamma)}\exp\left(-\sum_{m\geq 1}(-1)^{m-1}\frac{B_{m+1}}{m(m+1)}\kappa_m(v)\right)\times\right.\\
& \times \prod_{i}\exp\left(\sum_{m\geq1}(-1)^{m-1}\frac{1}{m(m+1)}B_{m+1}\left(\frac{\frac{c_{0i}}{f_{0i}}}{r}\right)\bar\psi_{i}^m\right)\times\\
& \times \prod_{j}\exp\left(\sum_{m\geq1}(-1)^{m-1}\frac{1}{m(m+1)}B_{m+1}\left(1-\frac{\frac{c_{\infty j}}{f_{\infty j}}}{r}\right)\bar\psi_{j}^m\right)\times\\
& \times \prod_{l=1}^k\exp\left(\sum_{m\geq1}(-1)^{m-1}\frac{1}{m(m+1)}B_{m+1}\bar\psi_{l}^m\right)\\
& \left. \times \prod_{\substack{e\in \mathrm E(\Gamma)\\ e=(\mathrm h_+,\mathrm h_-)}}\frac{r(\chi(\mathrm h_+))}{\bar\psi_{\mathrm h_+}+\bar\psi_{\mathrm h_-}}
\left(1-\exp\left(\sum_{m\geq 1}\frac{(-1)^{m-1}}{m(m+1)}B_{m+1}\left(\frac{\text{age}_{\chi(\mathrm h_+)}(L)}{r}+\frac{w(\mathrm h_+)}{r}\right)\left(\bar\psi_{\mathrm h_+}^m-(-\bar\psi_{\mathrm h_-})^m\right)\right)\right)\right].
\end{align*}
Note that we have
\begin{itemize}
\item $\frac{\text{age}_{\chi(\mathrm h_+)}}{r}+\frac{w(\mathrm h_+)}{r}=1-\frac{\text{age}_{\chi(\mathrm h_-)}(L)}{r}-\frac{w(\mathrm h_-)}{r}$ if $\text{age}_{\chi(\mathrm h_{\pm})}(L)\neq 0$.
\item $\frac{w(\mathrm h_+)}{r}=1-\frac{w(\mathrm h_-)}{r}$, if $\text{age}_{\chi(\mathrm h_{\pm})}(L)=0$. 
\end{itemize}
Bernoulli polynomials satisfy the following property $$B_m(x+y)=\sum_{l=0}^m\binom{m}{k}B_k(x)y^{m-k}.$$ 
This implies that terms of $\epsilon_*c(-L_{g,n}^{1/r})$ depend polynomially on $\{w(\mathrm h)|\mathrm h\in \mathrm H(\Gamma)\}$.
The proof of \cite[Proposition 3'']{JPPZ} may be modified to show that the polynomiality result remains valid for sums over $W_{\Gamma,\chi,r}$. Therefore we may apply the arguments of \cite[Proposition 5]{JPPZ} to conclude the following.
\begin{proposition}\label{prop:poly}
There exists a polynomial in $r$ which coincides with the cycle class $r^{2d-2g+1}\epsilon_*c_d(-L_{g,n}^{1/r})$ for $r\gg1$ and prime.
\end{proposition}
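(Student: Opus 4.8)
\section*{Proof proposal}

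The plan is to prove polynomiality stratum by stratum. Since $\epsilon_*c_d(-L_{g,n}^{1/r})$ is a \emph{finite} sum over pairs $(\Gamma,\chi)$ with $\Gamma\in G_{g,n}$ and $\chi\in\chi_{\Gamma,G}$, and a finite sum of polynomials in $r$ is again a polynomial in $r$, it suffices to treat one pair at a time. Fixing $(\Gamma,\chi)$, its contribution to $r^{2d-2g+1}\epsilon_*c_d(-L_{g,n}^{1/r})$ is $\tfrac{r^{2d-h^1(\Gamma)}}{|\Aut(\Gamma)|}$ times $\zeta_{\Gamma,\chi*}$ applied to the degree-$d$ part of the bracketed integrand, summed over all weightings $w\in W_{\Gamma,\chi,r}$; here I have used $r^{2d-2g+1}\cdot r^{2g-1-h^1(\Gamma)}=r^{2d-h^1(\Gamma)}$. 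The edge prefactors $\tfrac{r(\chi(\mathrm h_+))}{\bar\psi_{\mathrm h_+}+\bar\psi_{\mathrm h_-}}(1-\exp(\cdots))$ are genuine classes, since the exponential vanishes to first order and cancels the pole, and $r(\chi(\mathrm h_+))$ is a fixed integer independent of $r$.

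First I would isolate the $w$-dependence. The weighting variables $w(\mathrm h)$ enter the integrand only through the edge factors, and there only inside the Bernoulli polynomials $B_{m+1}\!\big((\on{age}_{\chi(\mathrm h_+)}(L)+w(\mathrm h_+))/r\big)$. Expanding via $B_m(x+y)=\sum_k\binom{m}{k}B_k(x)y^{m-k}$ with $x=\on{age}_{\chi(\mathrm h_+)}(L)/r$ and $y=w(\mathrm h_+)/r$ turns each such factor into a polynomial in $w(\mathrm h_+)$ with $r$-independent rational coefficients scaled by powers of $r^{-1}$. Extracting the degree-$d$ part of the bracket then writes it as a finite linear combination of monomials in the classes $\bar\psi_{\mathrm h},\bar\psi_i,\bar\psi_j,\bar\psi_l,\kappa_m(v)$ of total degree $d$, each with coefficient $r^{-N}Q(w)$, where $Q$ is a polynomial of bounded degree in $\{w(\mathrm h)\}_{\mathrm h\in\mathrm H(\Gamma)}$ with coefficients independent of $r$ and $N$ a monomial-dependent integer. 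This polynomial dependence on the weightings is precisely the modification of \cite[Proposition~3'']{JPPZ} announced above.

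The heart of the matter is to show that $\sum_{w\in W_{\Gamma,\chi,r}}Q(w)$ is a polynomial in $r$ for $r\gg1$ prime, following \cite[Proposition~5]{JPPZ}. By Definition~\ref{Weightings} the set $W_{\Gamma,\chi,r}$ consists of the lattice points of $\{0,\ldots,r-1\}^{\mathrm H(\Gamma)}$ with the legs fixed, with each edge condition $w(\mathrm h_+)+w(\mathrm h_-)\equiv 0$ or $-1\pmod r$ eliminating one half-edge variable, and with the vertex congruences $\sum_{\mathrm h\in v}w(\mathrm h)\equiv A(v,\chi)\pmod r$. For $r$ coprime to the fixed torsion of the underlying integer incidence matrix --- which holds for all sufficiently large primes --- these cut out a coset of order $r^{h^1(\Gamma)}$, and summing $Q$ over it reduces to Faulhaber-type summation of polynomials over complete residue systems (and, where the constraints decouple, the multiplication identity $\sum_{k=0}^{r-1}B_m\!\big((x+k)/r\big)=r^{1-m}B_m(x)$), yielding a polynomial in $r$. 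Reassembling the prefactor $r^{2d-h^1(\Gamma)-N}$, the class-monomials, and the finitely many pairs $(\Gamma,\chi)$ then gives the asserted polynomial, the negative powers of $r$ cancelling as in \cite{JPPZ}.

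The main obstacle I anticipate is exactly this last step. The reduction-mod-$r$ that expresses the eliminated half-edge variables in terms of the free ones is only piecewise affine, so a priori the sum is merely a quasi-polynomial in $r$; one must check that the periodic (fractional-part) corrections cancel to leave an honest polynomial. The new feature here, relative to \cite{JPPZ}, is that the edge conditions come in two flavours ($\equiv 0$ versus $\equiv -1$, according to whether $\on{age}_{\chi(\mathrm h_+)}(L)$ vanishes) and that the Bernoulli arguments are shifted by the fixed constants $\on{age}_{\chi(\mathrm h)}(L)$. Verifying that these shifts and the two edge normalizations do not disturb the cancellation mechanism of \cite{JPPZ}, so that polynomiality persists once $r$ is a sufficiently large prime, is the one step demanding genuine care; the power-of-$r$ bookkeeping and the reassembly of strata are then routine.
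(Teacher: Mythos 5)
Your proposal follows essentially the same route as the paper: expand the explicit formula for $\epsilon_*c(-L_{g,n}^{1/r})$ stratum by stratum, use the Bernoulli addition formula to exhibit polynomial dependence on the weightings $w(\mathrm h)$, invoke the (modified) polynomiality of sums over $W_{\Gamma,\chi,r}$ as in \cite[Proposition 3'']{JPPZ}, and conclude via the power-of-$r$ bookkeeping of \cite[Proposition 5]{JPPZ}. The paper's own proof is exactly this argument in compressed form, so your proposal is a correct (and more detailed) rendering of it, including correctly flagging the two genuinely new features --- the age-shifted Bernoulli arguments and the two flavours of edge congruence.
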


\begin{remark}
The orbifold structure at $\mathrm h_{\pm}$ has order $r(\chi(\mathrm h_{\pm}))r$ when $r\gg 1$ are primes. For our purpose this suffices.
\end{remark}
\subsection{A formula for stacky double ramification cycle}
\begin{theorem}\label{dr-cycle-theorem}
Given a finite group $G$ and double ramification data $A=\{\mu_0,\mu_\infty,I\}$, the stacky double ramification cycle $DR_g^G(A)$ is the constant term in $r$ of the cycle class
\[
a^{l(\mu_\infty)-l(\mu_0)}r\cdot\epsilon_* c_g(-L_{g,n}^{1/r})\in A^g(\overline{M}_{g,n}(BG)),
\]
for $r$ sufficiently large. In other words,
\[
DR_g^G(A)=a^{l(\mu_\infty)-l(\mu_0)}\on{Coeff}_{r^0}[r\cdot\epsilon_* c_g(-L_{g,n}^{1/r})]\in A^g(\overline{M}_{g,n}(BG)).
\]
\end{theorem}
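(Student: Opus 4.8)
The plan is to follow the proof of Pixton's formula in \cite{JPPZ}, adapting every step to the stacky target $[\mathbb P^1/G]$ and its root gerbe $BG(r)$. The two sides of the asserted identity have genuinely different origins: $DR_g^G(A)$ is the $\epsilon$-pushforward of a virtual class on the rubber space $\overline M_{g,I}([\mathbb P^1/G],\mu_0,\mu_\infty)^\sim$, whereas the right-hand side is extracted from the Chiodo-type formula for $c(-L^{1/r}_{g,n})$ on $\overline M_{g,\tilde\mu_0+\tilde\mu_\infty+I}(BG(r))$ established in Section \ref{subsec:gp_ext} and shown to be polynomial in $r$ in Proposition \ref{prop:poly}. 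The bridge between them will be virtual $\mathbb C^*$-localization on a single moduli space of stable maps to an $r$-th root stack of $[\mathbb P^1/G]$ relative to $[0/G]$ and $[\infty/G]$, in which the rubber appears as a distinguished fixed locus and the derived pushforward of the universal $r$-th root of $L$ furnishes the class $L^{1/r}_{g,n}$.

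First I would realize $DR_g^G(A)$ through the $\mathbb C^*$-action. Using rubber calculus in the fiber-class case \cite{MP}, exactly as in the non-stacky setting, the virtual class of the relative moduli space of maps to $[\mathbb P^1/G]$ decomposes under localization, and the contribution of the purely rubber fixed locus is $[\overline M_{g,I}([\mathbb P^1/G],\mu_0,\mu_\infty)^\sim]^{\mathrm{vir}}$; applying $\epsilon$ yields $DR_g^G(A)$. Separately, I would pass to the root stack: replacing the fibre line bundle $L\to BG$ by its $r$-th root $L^{1/r}\to BG(r)$ rigidifies the relative geometry and produces the gerbe $BG(r)\to BG$ of \eqref{eqn:bgr_bg}. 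On the corresponding moduli space the obstruction/moving contribution is governed by $\mathbf R\pi_* f^*L^{1/r}=L^{1/r}_{g,n}$, so that its $\mathbb C^*$-localization contribution is precisely $c(-L^{1/r}_{g,n})$. The combinatorics of the $r$-th root data at nodes and markings is exactly the data recorded by the weightings of Definition \ref{Weightings}: the conditions on $w(\mathrm h_\pm)$ at an edge and the condition $\sum_{\mathrm h\in v}w(\mathrm h)\equiv A(v,\chi)$ at a vertex encode which $r$-th roots extend across the twisted curve, which is why the sum over $W_{\Gamma,\chi,r}$ appears.

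The comparison step is then to match the two localization expressions and extract the constant term in $r$. By Proposition \ref{prop:poly} the class $r\cdot\epsilon_* c_g(-L^{1/r}_{g,n})$ agrees with a polynomial in $r$ for $r$ large and prime, so its $r^0$-coefficient is well defined. Following \cite[\S 4]{JPPZ}, I would identify this constant term with the rubber contribution to the localization, i.e. with $DR_g^G(A)$ up to normalization; the identification rests on analyzing how the torus-fixed loci of the root-stack map space behave as $r\to\infty$ and on the combinatorial structure of the $\epsilon_*$-formula of Section \ref{subsec:gp_ext}, in which the factor $r^{2g-1-h^1(\Gamma)}$ already packages the degree of $\epsilon$ together with the $\mu_r$-gerbe and node contributions $r(\chi(\mathrm h_+))r$. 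The normalization $a^{1-l(\mu_\infty)}r$ is then pinned down by bookkeeping of the stacky automorphisms: the overall factor $r$ comes from the $\mu_r$-gerbe $BG(r)\to BG$, while $a^{1-l(\mu_\infty)}$ comes from comparing the $\mu_a$-structures carried by the $l(\mu_\infty)$ relative markings over $[\infty/G]$ against a single overall rubber factor, using that the age at a point over $[\infty/G]$ is $1-\on{age}_{\mathfrak c_{\infty j}}(L)$ whereas that over $[0/G]$ is $\on{age}_{\mathfrak c_{0i}}(L)$.

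The main obstacle I anticipate is precisely this last comparison in the stacky setting: verifying that only the rubber fixed locus contributes to the $r^0$-coefficient, and pinning down the exact constant $a^{1-l(\mu_\infty)}r$. In \cite{JPPZ} (where $G=\{1\}$ and $a=1$) the analogous identification and normalization are already delicate, and here they are complicated by the orbifold structures at the markings and nodes, by the degree factor of $\epsilon$, and by the need to carry the conjugacy-class data $\chi$ and the ages through every term. Once the stacky localization is organized so that the torus-fixed data match the weightings of Definition \ref{Weightings}, I expect the remaining verification to be careful but routine accounting of powers of $r$ and $a$, with no new phenomena beyond those already handled in Section \ref{subsec:gp_ext}.
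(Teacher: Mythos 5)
Your overall strategy coincides with the paper's: localize on a moduli space of stable relative maps to an $r$-th root stack of $[\P^1/G]$, recognize the rubber space as the fixed locus over $[\infty/G]$ and the class $c(-L^{1/r}_{g,n})$ as the contribution of the stable vertices over $[0/G(r)]$, invoke Proposition \ref{prop:poly} to make the $r^0$-coefficient meaningful, and do the bookkeeping of powers of $r$ and $a$. Two corrections on the set-up: the paper roots $[\P^1/G]$ only along $[0/G]$ and takes maps relative only to $[\infty/G]$ (the $\mu_0$-conditions become orbifold marked points $\tilde\mu_0$, not relative conditions), and this asymmetry is what makes the Chiodo class appear over $0$ and the rubber over $\infty$; your ``relative to $[0/G]$ and $[\infty/G]$'' version would not localize in the required form.

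The genuine gap is in your extraction step. You describe the conclusion as ``matching two localization expressions'' and as ``verifying that only the rubber fixed locus contributes to the $r^0$-coefficient.'' Neither is what happens, and the second would actually yield $DR_g^G(A)=0$. The mechanism is a vanishing statement: since the rubber virtual class has a non-equivariant limit and $\C^*$ acts trivially on $\overline{M}_{g,n}(BG)$, the equivariant pushforward $\epsilon_*[\overline{M}_{g,I,\mu_0}([\P^1/G]_{r,1},\mu_\infty)]^{\mathrm{vir}}$ is a \emph{polynomial} in $t$, so after multiplying by $t$ and substituting $s=tr$ the coefficient of $s^0r^0$ is zero. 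The power-of-$r$ count then shows exactly \emph{two} graphs survive at this order — the single stable genus-$g$ vertex over $0$ (contributing $a^{l(\mu_0)}\on{Coeff}_{r^0}[\hat c_g\,a^{1-l(\mu_0)}]$ with $\hat c_g=r\,\epsilon_*c_g(-L^{1/r}_{g,n})$) and the single stable genus-$g$ vertex over $\infty$ (contributing $-a^{l(\mu_\infty)}DR^G_g(A)$) — and the theorem is precisely the statement that their sum vanishes. Without this vanishing argument there is no equation to solve, and the normalization $a^{1-l(\mu_\infty)}$ is not produced by comparing ages at the $\infty$-markings as you suggest, but by the ratio $a^{l(\mu_0)-l(\mu_\infty)}\cdot a^{1-l(\mu_0)}$ of the node-automorphism factors $|G_v|/|G_e|=a$ on the two sides together with the $a$-powers coming from the weights $t/r(e,v)$ with $r(e,v)=ar$.
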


We denote by $P_{g}^{G,d,r}(A)\in A^d(\overline{M}_{g,n}(BG))$ the degree $d$ component of the class
\begin{align*}
&\sum_{\Gamma\in G_{g,n}}\sum_{\chi\in \chi_{\Gamma,G}}\sum_{w\in W_{\Gamma,\chi,r}}
\frac{1}{|\Aut(\Gamma)|}\frac{1}{r^{h^1(\Gamma)}}\zeta_{\Gamma, \chi *}\left[ \prod_{i}\exp((\frac{c_{0i}}{f_{0i}})^2\bar\psi_{i})\prod_{j}\exp((\frac{c_{\infty j}}{f_{\infty j}})^2\bar\psi_{j})\cdot\right.\\
&\left.\prod_{\substack{e\in \mathrm E(\Gamma)\\ e=(\mathrm h_+,\mathrm h_-)}}r(\chi(\mathrm h_+))
\frac{1-\exp(-(\text{age}_{\chi(\mathrm h_+)}(L)+w(\mathrm h_+))(\text{age}_{\chi(\mathrm h_-)}(L)+w(\mathrm h_-))(\bar\psi_{\mathrm h_+}+\bar\psi_{\mathrm h_-}))}{\bar\psi_{\mathrm h_+}+\bar\psi_{\mathrm h_-}}\right].
\end{align*}
When the finite group $G$ is trivial, $P_g^{G,d,r}$ reduces to Pixton's polynomial in \cite{JPPZ}. 
Arguing as in the proof of \cite[Proposition 5]{JPPZ}, we see that $r^{2d-2g+1}\epsilon_* c_d(-L_{g,n}^{1/r})$ and $2^{-d}P_{g}^{G,d,r}(A)$ have the same constant term. Then the following corollary is a result of Theorem \ref{dr-cycle-theorem}.

\begin{corollary}
The stacky double ramification cycle $DR^G_g(A)$  is the constant term in $r$ of $a^{l(\mu_\infty)-l(\mu_0)}2^{-g}P_{g}^{G,g,r}(A)\in A^g(\overline{M}_{g,n}(BG))$, for $r$ sufficiently large.
\end{corollary}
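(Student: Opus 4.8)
The plan is to obtain the Corollary as the $d=g$ specialization of Theorem~\ref{dr-cycle-theorem}, combined with the comparison between the pushed-forward Chern class and Pixton's polynomial recorded immediately before the Corollary. Setting $d=g$ makes the normalizing power $r^{2d-2g+1}$ equal to $r^{1}=r$, so the class appearing in Theorem~\ref{dr-cycle-theorem} is exactly $r\cdot\epsilon_*c_g(-L_{g,n}^{1/r})$, which is the $d=g$ case of $r^{2d-2g+1}\epsilon_*c_d(-L_{g,n}^{1/r})$. Since this class and $2^{-g}P_g^{G,g,r}(A)$ have the same constant term in $r$, and since the fixed scalar $a^{1-l(\mu_\infty)}$ commutes with the operator $\on{Coeff}_{r^0}$, we get
\[
DR_g^G(A)=a^{1-l(\mu_\infty)}\on{Coeff}_{r^0}\big[r\cdot\epsilon_*c_g(-L_{g,n}^{1/r})\big]=a^{1-l(\mu_\infty)}\on{Coeff}_{r^0}\big[2^{-g}P_g^{G,g,r}(A)\big],
\]
which is the claim. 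The entire content therefore lies in the identity of constant terms, which I would prove for every $d$ as follows.

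I would begin from the explicit formula for $\epsilon_*c(-L_{g,n}^{1/r})$ derived above, take its degree-$d$ component, and multiply by $r^{2d-2g+1}$. By Proposition~\ref{prop:poly} this cycle coincides, for $r$ large and prime, with a polynomial in $r$, so $\on{Coeff}_{r^0}$ is meaningful; the same holds for $P_g^{G,d,r}(A)$ once one knows the inner sum over $W_{\Gamma,\chi,r}$ is polynomial in $r$. The comparison is then run term by term over pairs $(\Gamma,\chi)$. On the stratum indexed by $\Gamma$ the prefactor $r^{2g-1-h^1(\Gamma)}$ combines with the normalization $r^{2d-2g+1}$ to give $r^{2d-h^1(\Gamma)}$, whereas $P_g^{G,d,r}(A)$ carries the prefactor $r^{-h^1(\Gamma)}$; thus a factor $r^{2d}$ must be absorbed, and the only source of negative powers of $r$ are the arguments $\on{age}(L)/r$ and $w/r$ inside the Bernoulli polynomials.

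The main obstacle is exactly this term-by-term matching, which I would carry out following the proof of \cite[Proposition~5]{JPPZ}. The key mechanism is a power count: a monomial $\bar\psi^m$ weighted by the degree-$k$ part of $B_{m+1}(\,\cdot\,/r)$ contributes cohomological degree $m$ and an $r$-power $-k$, and since $\deg B_{m+1}=m+1$ one can reach $k=2m$ (needed to cancel the $r^{2}$ allotted per unit of degree by the $r^{2d}$ prefactor) only when $m=1$. Hence only the $m=1$ term, through the top quadratic coefficient of $B_2$, survives into the $r^{0}$ coefficient; the vertex factors $\exp\big(-\sum_m(-1)^{m-1}\tfrac{B_{m+1}}{m(m+1)}\kappa_m(v)\big)$ and all higher Bernoulli contributions carry strictly positive powers of $r$ after normalization and drop out. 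The surviving coefficient $\tfrac{1}{m(m+1)}\big|_{m=1}=\tfrac12$, accumulated over the $d$ units of cohomological degree, produces the overall factor $2^{-d}$ relating the leading part of the pushforward to $P_g^{G,d,r}(A)$. I expect the delicate case to be the edge contributions: there one must invoke the weighting relations of Definition~\ref{Weightings}, so that $(\on{age}_{\chi(\mathrm{h}_+)}(L)+w(\mathrm{h}_+))/r$ and $(\on{age}_{\chi(\mathrm{h}_-)}(L)+w(\mathrm{h}_-))/r$ sum to $1$, and combine them with the factor $1/(\bar\psi_{\mathrm{h}_+}+\bar\psi_{\mathrm{h}_-})$ to recognize the quadratic form $(\on{age}_{\chi(\mathrm{h}_+)}(L)+w(\mathrm{h}_+))(\on{age}_{\chi(\mathrm{h}_-)}(L)+w(\mathrm{h}_-))$ of $P_g^{G,d,r}(A)$; one finally checks that the residual sum over weightings, divided by $r^{h^1(\Gamma)}$, yields the predicted constant term.
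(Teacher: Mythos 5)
Your proposal is correct and follows essentially the same route as the paper: the corollary is deduced by combining Theorem~\ref{dr-cycle-theorem} at $d=g$ (where $r^{2d-2g+1}=r$) with the statement that $r^{2d-2g+1}\epsilon_*c_d(-L_{g,n}^{1/r})$ and $2^{-d}P_g^{G,d,r}(A)$ share the same constant term, the latter being established by running the argument of \cite[Proposition 5]{JPPZ}. Your expanded sketch of that comparison (the $m=1$/top-coefficient-of-$B_2$ power count producing the $2^{-d}$, and the use of the weighting relations on edges) is consistent with what the paper invokes by citation, so there is nothing further to add.
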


\section{Localization analysis}
In this section, we give a proof of Theorem \ref{dr-cycle-theorem} by virtual localization on the moduli space of stable relative maps to the target obtained from $[\P^1/G]$ by a root construction.

\subsection{Set-up}
Let $[\P^1/G]_{r,1}$ be the stack of $r$-th roots of $[\P^1/G]$ along the divisor $[0/G]$. By construction, there is a map $[\P^1/G]_{r,1}\to [\P^1/G]$. Over the divisor $[0/G]\simeq BG$, this map is the $\mu_r$-gerbe $BG(r)\to BG$ studied in Section \ref{subsec:gp_ext}.

Let $\mu_0, \mu_\infty, I$ be as in Definition \ref{defn:rel_data}. Let $\tilde{\mu}_0=\{(c_{0i},f_{0i},\tilde{\mathfrak c}_{0i})\}_i$, where $\tilde{\mathfrak{c}}_{0i}$ are given in Definition \ref{defn:lift}. Let
\[
\overline{M}_{g,I,\mu_0}([\P^1/G]_{r,1},\mu_\infty)
\]
be the moduli space of stable relative maps to the pair $([\P^1/G]_{r,1},[\infty/G])$. The moduli space parametrizes connected, semistable, twisted curves $C$ of genus $g$ with non-relative marked points together with a map
\[
f:C\rightarrow P
\]
where $P$ is an expansion of $[\P^1/G]_{r,1}$ over $[\infty/G]$ such that
\begin{enumerate}
\item
orbifold structures at the non-relative marked points are described by $\tilde{\mu}_0$ and $I$;
\item
relative conditions over $[\infty/G]$ are described by $\mu_\infty$.
\item
The map $f$ satisfies the ramification matching condition over the internal nodes of the destabilization $P$.
\end{enumerate}

By \cite{af},  $\overline{M}_{g,I,\mu_0}([\P^1/G]_{r,1},\mu_\infty)$ has a perfect obstruction theory and its virtual fundamental class has complex dimension
\[
\text{vdim}_\C [\overline{M}_{g,I,\mu_0}([\P^1/G]_{r,1},\mu_\infty)]^{\text{vir}}=2g-2+n+\frac{|\mu_\infty|}{r}-\sum\limits_{i=1}^{l(\mu_0)}\text{age}_{\tilde{\mathfrak{c}}_{0i}}(L^{1/r}),\]
where $n=l(\mu_0)+l(\mu_\infty)+\#I$ and $|\mu_\infty|:=\sum_j \frac{c_{\infty j}}{f_{\infty j}}$.

For $r\gg1$, we have $\text{age}_{\tilde{\mathfrak{c}}_{0i}}(L^{1/r})=\frac{c_{0i}}{rf_{0i}}$. In this case the virtual dimension is $2g-2+n$.

In what follows, we assume that $r$ is large and is a prime number.

\subsection{Fixed loci}
The standard $\C^*$-action on $\P^1$ is given by $$\xi\cdot [z_0, z_1]:=[z_0, \xi z_1], \quad \xi\in \C^*, \, [z_0, z_1]\in \P^1.$$
This induces $\C^*$-actions on $[\P^1/G]$, $[\P^1/G]_{r,1}$, and $\overline{M}_{g,I,\mu_0}([\P^1/G]_{r,1},\mu_\infty)$. The $\C^*$-fixed loci of $\overline{M}_{g,I,\mu_0}([\P^1/G]_{r,1},\mu_\infty)$ are labeled by decorated graphs $\Gamma$. 

\begin{notation}
A decorated graph $\Gamma$ is defined as follows:
\begin{enumerate}
\item (Graph data)
\begin{itemize}

\item $V(\Gamma)$: the set of vertices of $\Gamma$;

\item $E(\Gamma)$: the set of edges of $\Gamma$;

\item $F(\Gamma)$: the set of flags of $\Gamma$, defined to be
\[
F(\Gamma)=\{(e,v)\in E(\Gamma)\times V(\Gamma)| v\in e\};
\]
\item
$L(\Gamma)$: the set of legs;
\end{itemize}
\item (Decoration data)

\begin{itemize}
\item each vertex $v\in V(\Gamma)$ is assigned a genus $g(v)$, a label of either $[0/G(r)]$ or $[\infty/G]$, and a group 
$$G_v:=
\begin{cases}
G(r)\quad \text{if } v \text{ is over } [0/G(r)],\\
G\quad \text{if } v \text{ is over } [\infty/G].
\end{cases}
$$

\item each edge $e\in E(\Gamma)$ is labelled with a conjugacy class $(k_e)\subset G_e:=K$ and a positive integer $d_e$, called the degree;

\item each flag $(e,v)$ is labelled with a conjugacy class $(k_{(e,v)})\subset G_v$;

\item a map $s:L(\Gamma) \rightarrow V(\Gamma)$ that assigns legs to vertices of $\Gamma$;

\item legs are labelled with markings in $\mu_0\cup I\cup \mu_\infty$. Namely $j\in L(\Gamma)$ is labelled with a conjugacy class $(k_j)\subset G_v$ where
$$\begin{cases}
(k_j)\in \{\tilde{\mathfrak{c}}_{0i}\}_i\cup I\quad \text{if } v \text{ is over } [0/G(r)];\\
(k_j)\in \{\mathfrak{c}_{\infty j}\}_j\cup I \quad \text{if } v \text{ is over }[\infty/G].
\end{cases}$$

\end{itemize}

\end{enumerate}
The data above satisfy certain compatibility conditions. We omit them as they do not enter our analysis.
\end{notation}

A vertex $v\in V(\Gamma)$ over $[0/G(r)]$ corresponds to a contracted component mapping to $[0/G(r)]$ given by an element of the moduli space $$\overline{M}_v:=\overline{M}_{g(v),I(v),\mu_0(v)}(BG(r))$$ 
of genus $g(v)$ stable maps to $BG(r)$ such that orbifold structures at marked points are given by corresponding entries of $\tilde{\mu}_0$ and $(k_{(e,v)})^{-1}$ for flags attached to $v$. The dimension of $\overline{M}_{g(v),I(v),\mu_0(v)}(BG(r))$ is $3g(v)-3+\#I(v)+l(\mu_0(v))+|E(v)|$.

The discussion on fixed stable maps over $[\infty/G]\in [\P^1/G]_{r,1}$ is similar to that in \cite[Section 2.3]{JPPZ}, we omit the details.

Let $\overline{M}_\infty^{\sim}$ be the moduli space of stable maps to rubber. Its virtual class $[\overline{M}_\infty^{\sim}]^\text{vir}$ has complex dimension $2g(\infty)-3+n(\infty)$, where $g(\infty)$ is the domain genus and $n(\infty)=\# I(\infty)+l(\mu_\infty)+|E(\Gamma)|$ is the total number of markings and incidence edges.

We write $V_0^S(\Gamma)$ for the set of stable vertices of $\Gamma$ over $[0/G(r)]$. If the target degenerates, define
\[
\overline{M}_{\Gamma}=\prod_{v\in V_0^S(\Gamma)}\overline{M}_{g(v),I(v),\mu_0(v)}(BG(r))\times \overline{M}_\infty^{\sim}. 
\]
If the target does not degenerate, define
\[
\overline{M}_{\Gamma}=\prod_{v\in V_0^S(\Gamma)}\overline{M}_{g(v),I(v),\mu_0(v)}(BG(r)).
\]
The fixed locus corresponding to $\Gamma$ is isomorphic to $\overline{M}_\Gamma$ quotiented by the automorphism group of $\Gamma$ and the product of cyclic groups associated to the Galois covers of the edges. There is a natural map $\iota: \overline{M}_\Gamma\to \overline{M}_{g,I,\mu_0}([\P^1/G]_{r,1},\mu_\infty)$.

Assuming $r\gg1$, we may argue as in \cite[Lemma 6]{JPPZ} to conclude that there are only two types of unstable vertices:
\begin{itemize}
\item $v$ is mapped to $[0/G]$, $g(v)=0$, $v$ carries one marking and one incident edge;

\item $v$ is mapped to $[\infty/G]$, $g(v)=0$, $v$ carries one marking and one incident edge.
\end{itemize}

\subsection{Contributions to localization formula}
By convention, the $\C^*$-equivariant Chow ring of a point is identified with $\Q[t]$ where $t$ is the first Chern class of the standard representation.

Let $[f: C\to [\P^1/G]_{r,1}]\in \overline{M}_\Gamma$. The $\C^*$-equivariant Euler class of the virtual normal bundle in $\overline{M}_{g,I,\mu_0}([\P^1/G]_{r,1},\mu_\infty)$ to the $\mathbb C^*$-fixed locus indexed by $\Gamma$ can be described as
\[
e(N^{\on{vir}})^{-1}=\frac{e(H^1(C,f^*T_{[\P^1/G]_{r,1}}(-[\infty/G])))}{e(H^0(C,f^*T_{[\P^1/G]_{r,1}}(-[\infty/G])))}\left(\prod_i e(N_i)\right)^{-1}e(N_\infty)^{-1}.
\]

Let $V^S(\Gamma)$ be the set of stable vertices in $V(\Gamma)$. The set of stable flags is defined to be
\[
F^S(\Gamma)=\{(e,v)\in F(\Gamma)| v\in V^S(\Gamma)\}.
\]
We have
\begin{equation}\label{localization-equ}
[\overline{M}_{g,I,\mu_0}([\P^1/G]_{r,1},\mu_\infty)]^{\on{vir}}=\sum_\Gamma \frac{1}{|Aut(\Gamma)|}\frac{1}{\prod_{e\in E(\Gamma)}d_e|G_e|}\left(\prod_{(e,v)\in F^S(\Gamma)}\frac{|G_v|}{r_{(e,v)}}\right)\iota_*\left(\frac{[\overline{M}_{\Gamma}]^{\on{vir}}}{e(N^{\on{vir}})}\right)
\end{equation}
where $r_{(e,v)}$ is the order of $k_{(e,v)}\in G_v$.

The localization contributions are given as follows.
\begin{enumerate}
\item
Contributions to
\[
\frac{e(H^1(C,f^*T_{[\P^1/G]_{r,1}}(-[\infty/G])))}{e(H^0(C,f^*T_{[\P^1/G]_{r,1}}(-[\infty/G])))}.
\]

\begin{enumerate}
\item
For each stable vertex $v\in V(\Gamma)$ over $0$, the contribution is
\[
c^{\C^*}_\text{rk}((-L^{1/r}(v)))=\sum_{d\geq 0}c_d(-L_{g,n}^{1/r})\left(\frac{t}{r}\right)^{g(v)-1+|E(v)|-d}.
\]
Here  
the virtual rank $\text{rk}$ is $g(v)-1+|E(v)|$. This follows from Riemann-Roch together with the observation that because $r\gg 1$, the age terms in Riemann-Roch add up to $|E(v)|$.

\item
The two possible unstable vertices contribute  $1$.

\item
The edge contribution is trivial because $r\gg 1$.

\item
The contribution of a node $N$ over $[0/G(r)]$ is trivial.

\item
Nodes over $[\infty/G]$ contribute $1$.

\end{enumerate}

\item
Contributions to $\prod_i e(N_i)$.

The product $\prod_i e(N_i)$ is over all nodes over $[0/G(r)]$ formed by edges of $\Gamma$ attaching to vertices. If $N$ is such a node, then
\[
e(N)=\frac{t}{r_{(e,v)}d_e}-\frac{\psi_e}{r_{(e,v)}}.
\]

Hence, combining (i) and (ii), the contribution of this stable vertex $v$ is:
\begin{align*}
&\prod_{e\in E(v)}\frac{1}{\frac{t}{r_{(e,v)}d_e}-\frac{\psi_e}{r_{(e,v)}}}\sum_{d\geq 0}c_d(-L_{g,n}^{1/r})\left(\frac{t}{r}\right)^{g(v)-1+|E(v)|-d}.
\end{align*}

\item
Contributions to $e(N_\infty)$.

If the target degenerates, there is an additional factor
\[
\frac{1}{e(N_\infty)}=-\frac{\prod_{e\in E(\Gamma)}d_e r_{(e,v)}}{t+\psi_\infty}.
\]
\end{enumerate}
\subsection{Extraction}
The virtual class of the moduli space of rubber maps has non-equivariant limit, and $\C^*$ acts trivially on $\overline{M}_{g,n}(BG)$. Therefore the  $\mathbb C^*$-equivariant push-forward $$\epsilon_*([\overline{M}_{g,I,\mu_0}([\P^1/G]_{r,1},\mu_\infty)]^{\on{vir}})$$ via the natural map
\[
\epsilon: \overline{M}_{g,I,\mu_0}([\P^1/G]_{r,1},\mu_\infty) \rightarrow \overline{M}_{g,n}(BG)
\]
is a polynomial in $t$. Hence its coefficient of $t^{-1}$ is equal to $0$. 

Set $s=tr$, we will extract the coefficient of $s^0r^0$ in $\epsilon_*(t[\overline{M}_{g,I,\mu_0}([\P^1/G]_{r,1},\mu_\infty)]^{\on{vir}})$. We denote the map
\[
\epsilon: \overline{M}_{g(v),I(v),\mu(v)}(BG(r))\rightarrow \overline{M}_{g(v),n(v)}(BG).
\]
We write
\[
\hat{c}_d=r^{2d-2g(v)+1}\epsilon_* c_d(-L_{g,n}^{1/r})\in A^d(\overline{M}_{g(v),n(v)}(BG)),
\]
then by Proposition \ref{prop:poly}, $\hat{c}_d$ is a polynomial in $r$ for $r$ sufficiently large. So the operation of extracting the coefficient of $r^0$ is valid.
 
 We have
 \begin{align*}
&\epsilon_*(t[\overline{M}_{g,I,\mu_0}([\P^1/G]_{r,1},\mu_\infty)]^{\on{vir}})\\
=&\frac{s}{r}\cdot\sum_\Gamma \frac{1}{|Aut(\Gamma)|}\frac{1}{\prod_{e\in E(\Gamma)}d_e|G_e|}\prod_{(e,v)\in F^S(\Gamma)}\frac{|G_v|}{r_{(e,v)}}\epsilon_*\iota_*\left(\frac{[\overline{M}_{\Gamma}]^{\on{vir}}}{e(N^{\on{vir}})}\right),
\end{align*}
where $\epsilon_*\iota_*\left(\frac{[\overline{M}_{\Gamma}]^{\on{vir}}}{e(N^{\on{vir}})}\right)$ is the product of the following factors:

\begin{enumerate}
\item
For each stable vertex $v\in V(\Gamma)$ over $0$, the factor is
\[
\frac{r}{s}\prod_{e\in E(v)}\frac{r_{(e,v)}}{r}\frac{d_e}{1-\frac{rd_e}{s}\psi_e}\sum_{d\geq 0}\hat{c}_ds^{g(v)-d}.
\]
Each edge contributes a factor $\frac{r_{(e,v)}}{r}$ which cancels with the factor $\frac{|G_v|}{r_{(e,v)}}=\frac{r|G|}{r_{(e,v)}}$ in equation (\ref{localization-equ}) coming from the contribution of the automorphism group of the node labelled by $(k_{(e,v)})^{-1}$. Therefore, we have at least one positive power of $r$ for each stable vertex of the graph over $0$.

\item
When the target degenerates, there is a factor
\[
-\frac{r}{s}\cdot\frac{\prod_{e\in E(\Gamma)}d_e r_{(e,v)}}{1+\frac{r}{s}\psi_\infty}.
\]
We have at least one positive power of $r$ when the target degenerates.
\end{enumerate}

There are only two graphs which have exactly one $r$ factor in the numerator:
\begin{itemize}

\item the graph with a stable vertex of genus $g$ over $0$ and $l(\mu_\infty)$ unstable vertices over $\infty$;

\item the graph with a stable vertex of genus $g$ over $\infty$ and $l(\mu_0)$ unstable vertices over $0$.

\end{itemize}
Therefore, the $r^0$ coefficient is
\begin{align*}
&\on{Coeff}_{r^0}[\epsilon_*(t[\overline{M}_{g,I,\mu_0}([\P^1/G]_{r,1},\mu_\infty)]^{\on{vir}})]\\
=& \frac{|G|^{l(\mu_\infty)}}{|G_e|^{l(\mu_\infty)}}\cdot \on{Coeff}_{r^0}[\sum_{d\geq 0}\hat{c}_ds^{g-d}
]-\frac{|G|^{l(\mu_0)}}{|G_e|^{l(\mu_0)}}DR_g^G(A).
\end{align*}

To extract the coefficient of $s^0$, we take $d=g$,
\[
\on{Coeff}_{r^0s^0}[\epsilon_*(t[\overline{M}_{g,I,\mu_0}([\P^1/G]_{r,1},\mu_\infty)]^{\on{vir}})]
=\frac {|G|^{l(\mu_\infty)}}{|G_e|^{l(\mu_\infty)}}\cdot \on{Coeff}_{r^0}[\hat{c}_g 
]-\frac{|G|^{l(\mu_0)}}{|G_e|^{l(\mu_0)}}DR_g^G(A).
\]
By the vanishing of $\on{Coeff}_{r^0s^0}[\epsilon_*(t[\overline{M}_{g,I,\mu_0}([\P^1/G]_{r,1},\mu_\infty)]^{\on{vir}})]$, we have
\[
DR_g^G(A)=a^{l(\mu_\infty)-l(\mu_0)}\on{Coeff}_{r^0}[r\cdot\epsilon_* c_g(-L_{g,n}^{1/r})]\in A^g(\overline{M}_{g,n}(BG)).
\]
The proof is complete.

\end{document}